\documentclass[12pt]{amsart}

\usepackage{amsthm}
\usepackage{amssymb}
\usepackage{amsmath}

\textwidth=15cm
\textheight=21cm
\hoffset=-1.3cm
\baselineskip=18pt plus 3pt
\numberwithin{equation}{section}
\newtheorem{thm}{Theorem}[section]

\newtheorem{lem}[thm]{Lemma}

\theoremstyle{definition}

\newtheorem{rem}[thm]{Remark}


\newcommand{\bk}{\mathbf{k}}


\begin{document}

\title[On a weighted sum of multiple $T$-values of fixed weight and depth]
{On a weighted sum of multiple $T$-values \\ of fixed weight and depth}
\author{Yoshihiro Takeyama}
\address{Department of Mathematics,
Faculty of Pure and Applied Sciences,
University of Tsukuba, Tsukuba, Ibaraki 305-8571, Japan}
\email{takeyama@math.tsukuba.ac.jp}

\begin{abstract}
The multiple $T$-value, which is a variant of multiple zeta value of level two,
is introduced by Kaneko and Tsumura.
We show that the generating function of a weighted sum of the multiple $T$-values
of fixed weight and depth is given in terms of the multiple $T$-values of depth one
by solving a differential equation of Heun type.
\end{abstract}
\maketitle

\setcounter{section}{0}
\setcounter{equation}{0}


\section{Introduction}

We call a tuple of positive integers an \textit{index}.
The \textit{weight}, \textit{depth} and \textit{height} of an index
$\bk=(k_{1}, \ldots , k_{n})$ are defined by
$k=\sum_{j=1}^{n}k_{j}, \, n$ and $s=\left|\{ j \mid k_{j}\ge 2 \}\right|$,
respectively.
An index $\bk=(k_{1}, \ldots , k_{n})$ is called \textit{admissible} if $k_{n} \ge 2$.
Denote by $I(k, n, s)$ the set of indices of weight $k$, depth $n$ and height $s$,
and by $I_{0}(k, n, s)$ the subset of $I(k, n, s)$ consisting of admissible indices.

In this paper we consider a generating function of
the \textit{multiple $T$-value} introduced by Kaneko and Tsumura \cite{KT1}
as a variant of multiple zeta value of level two.
The multiple $T$-value is defined by
\begin{align*}
T(\bk)=2^{n}\sum_{\substack{0<m_{1}<\cdots <m_{n} \\
\forall{i}: \, m_{i}\equiv i \, \hbox{\scriptsize mod}\, 2}}
\frac{1}{m_{1}^{k_{1}}\cdots m_{n}^{k_{n}}}
\end{align*}
for an admissible index $\bk=(k_{1}, \ldots , k_{n})$.
Our result is the following formula for the generating function of
the weighted sum of the multiple $T$-values of fixed weight and depth:

\begin{thm}\label{thm:main}
It holds that
\begin{align}
&
1-\sum_{k>n \ge 1}
\left(\sum_{s=1}^{n}\frac{1}{2^{s}}\sum_{\bk \in I_{0}(k, n, s)}T(\bk) \right)x^{k-n}y^{n}
\label{eq:main} \\
&\quad {}=\exp{\left(
\sum_{n\ge 2}\frac{T(n)}{2n}(x^{n}+y^{n}-(x+y)^{n})
\right)}.
\nonumber
\end{align}
\end{thm}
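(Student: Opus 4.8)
\emph{Overview.} My plan is to translate everything into iterated integrals, package the weighted sums into a pair of generating functions in an auxiliary variable $u$, reduce to a single second‑order (Heun‑type) ODE, solve it, and finally rewrite the resulting closed form as the stated exponential.

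\emph{Step 1: iterated integrals and combinatorics.} Recall the iterated‑integral representation of $T$: for an admissible index $(k_{1},\dots ,k_{n})$,
\[
T(k_{1},\dots ,k_{n})=2^{n}\int_{0<t_{1}<\cdots <t_{k}<1}\omega _{1}\,\omega _{0}^{k_{1}-1}\,\omega _{1}\,\omega _{0}^{k_{2}-1}\cdots \omega _{1}\,\omega _{0}^{k_{n}-1},\qquad \omega _{0}=\frac{dt}{t},\ \ \omega _{1}=\frac{dt}{1-t^{2}},
\]
where the forms $\omega _{1},\omega _{0}^{k_{1}-1},\dots$ are assigned to $t_{1},t_{2},\dots ,t_{k}$ respectively (this is checked directly, the factor $2^{l}$ from $\frac{2\,dt}{1-t^{2}}$ producing the parity shifts $m_{i}\equiv i$). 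Reading off the word $w$ of this integral, admissibility says that $w$ ends in $\omega _{0}$, the depth is the number of $\omega _{1}$'s, and — the key point — the height $s$ equals the number of occurrences of the pattern $\omega _{1}\omega _{0}$ in $w$. Hence
\[
\sum_{s=1}^{n}\frac{1}{2^{s}}\sum_{(k_{1},\dots ,k_{n})\in I_{0}(k,n,s)}T(k_{1},\dots ,k_{n})=\sum_{w}2^{\#\omega _{1}(w)}\,2^{-\#(\omega _{1}\omega _{0})(w)}\int_{0<t_{1}<\cdots <t_{k}<1}\omega _{w},
\]
the sum being over all words $w$ of length $k$ with exactly $n$ letters $\omega _{1}$, beginning with $\omega _{1}$ and ending with $\omega _{0}$.

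\emph{Step 2: generating functions and a Heun equation.} For $u\in[0,1]$ let $A(u)$ (resp.\ $B(u)$) be the generating series — with a factor $x$ for each $\omega _{0}$, $2y$ for each $\omega _{1}$, and $\tfrac12$ for each pattern $\omega _{1}\omega _{0}$ — of the truncated integrals $\int_{0<t_{1}<\cdots <u}\omega _{w}$ over words $w$ beginning with $\omega _{1}$ and ending with $\omega _{0}$ (resp.\ ending with $\omega _{1}$), with $B(0)=1$ accounting for the empty word. Differentiating in $u$ and peeling off the outermost form gives the linear system
\[
u\,A'=xA+\tfrac{x}{2}(B-1),\qquad (1-u^{2})\,B'=2y(A+B),\qquad A(0)=0,\ B(0)=1
\]
($'=d/du$), and by construction the left‑hand side of \eqref{eq:main} is $1-A(1)$. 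Eliminating $B$ yields the Fuchsian equation
\[
u(1-u^{2})A''+\bigl[(1-x)(1-u^{2})-2uy\bigr]A'+xy\,A=xy,
\]
with regular singular points $0,1,-1,\infty$: a Heun equation. Its constant particular solution $A\equiv 1$ reduces the problem to the homogeneous equation for $H:=A-1$ with $H(0)=-1$ (the second datum $H'(0)=xy/(1-x)$ is read off from the system).

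\emph{Step 3: solving the ODE — the main obstacle.} Split $H$ into its parts even and odd in $u$; the coupled system they satisfy becomes, under the substitution $v=u^{2}$ (which identifies the singular points $u=\pm1$) together with a suitable power‑of‑$u$ normalization, a rank‑two Fuchsian system with only the three singular points $v=0,1,\infty$, hence solvable in terms of Gauss's hypergeometric function. Solving with the prescribed local behaviour at $v=0$ and evaluating at $v=1$ via Gauss's summation $\,_{2}F_{1}(a,b;c;1)=\Gamma(c)\Gamma(c-a-b)/(\Gamma(c-a)\Gamma(c-b))$ — in which the parameters come out as $a=\tfrac{x}{2}$, $b=\tfrac{y}{2}$, $c=\tfrac12$ — gives
\[
1-A(1)=\,_{2}F_{1}\!\Bigl(\tfrac{x}{2},\tfrac{y}{2};\tfrac12;1\Bigr)^{-1}=\frac{\Gamma\!\bigl(\tfrac{1-x}{2}\bigr)\,\Gamma\!\bigl(\tfrac{1-y}{2}\bigr)}{\Gamma\!\bigl(\tfrac12\bigr)\,\Gamma\!\bigl(\tfrac{1-x-y}{2}\bigr)}.
\]
Carrying out this reduction and pinning down the connection coefficient with the correct constants is the hard part; one must also be slightly careful because $A(u)$ is a genuine (coefficientwise finite) function at $u=1$ while $B(u)$ is only a formal object there, so the manipulations should be arranged on $A$ and $(1-u^{2})B'$ rather than on $B$.

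\emph{Step 4: rewriting as an exponential.} By the Weierstrass product for $1/\Gamma$,
\[
\frac{\Gamma\!\bigl(\tfrac{1-x}{2}\bigr)\,\Gamma\!\bigl(\tfrac{1-y}{2}\bigr)}{\Gamma\!\bigl(\tfrac12\bigr)\,\Gamma\!\bigl(\tfrac{1-x-y}{2}\bigr)}=\prod_{\substack{m\ge 1\\ m\ \mathrm{odd}}}\frac{m\,(m-x-y)}{(m-x)(m-y)}=\prod_{\substack{m\ge 1\\ m\ \mathrm{odd}}}\frac{1-\frac{x+y}{m}}{\bigl(1-\frac{x}{m}\bigr)\bigl(1-\frac{y}{m}\bigr)}.
\]
Taking logarithms and expanding $\log(1-z/m)=-\sum_{n\ge 1}z^{n}/(nm^{n})$, the $n=1$ contributions cancel since $x+y-(x+y)=0$, while for $n\ge 2$ one has $\sum_{m\ \mathrm{odd}}m^{-n}=T(n)/2$; hence the product equals $\exp\bigl(\sum_{n\ge 2}\tfrac{T(n)}{2n}(x^{n}+y^{n}-(x+y)^{n})\bigr)$, which is precisely the right‑hand side of \eqref{eq:main}. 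Combining with Step 3 proves the theorem.
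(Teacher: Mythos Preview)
Your setup in Steps 1, 2, and 4 is correct and runs parallel to the paper: the iterated-integral bookkeeping, the system for $A,B$, the elimination to
\[
u(1-u^{2})A''+\bigl[(1-x)(1-u^{2})-2uy\bigr]A'+xyA=xy,
\]
and the final rewriting of the Gamma product as the stated exponential are all fine. (Your closed form $\Gamma(\tfrac{1-x}{2})\Gamma(\tfrac{1-y}{2})/\bigl(\Gamma(\tfrac12)\Gamma(\tfrac{1-x-y}{2})\bigr)$ is, by Legendre duplication, equal to the paper's $\dfrac{\Gamma(1-x)\Gamma(1-y)}{\Gamma(1-x-y)}\cdot\dfrac{\Gamma(1-\tfrac{x+y}{2})}{\Gamma(1-\tfrac{x}{2})\Gamma(1-\tfrac{y}{2})}$.)

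The genuine gap is Step 3. The homogeneous Heun equation you obtain is \emph{not} invariant under $u\mapsto -u$: its local exponents are $\{0,1-y\}$ at $u=1$ but $\{0,1+y\}$ at $u=-1$, and no prefactor of the form $(1-u)^{\alpha}(1+u)^{\beta}$ can symmetrise these simultaneously. Consequently the even/odd split followed by $v=u^{2}$ does not collapse to a rank-two Fuchsian system with three singularities in the way you describe; in particular there is no direct identification of $H$ with a single ${}_{2}F_{1}$ in $u^{2}$ (indeed $u'(0)=-xy/(1-x)\neq 0$, so $u$ is not even). Your own wording (``a suitable power-of-$u$ normalization\dots\ pinning down the connection coefficient\dots\ is the hard part'') effectively flags this as unproved.

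What the paper does instead at this exact point is the Ishkhanyan--Suominen trick: with $u(t)=1-\tfrac{xy}{2}\Phi_{0}(t)$ (your $-H$), set
\[
v(t)=t^{1+x}(1-t)^{y}\bigl(t^{-x}u(t)\bigr)'.
\]
Then $v$ satisfies a new second-order equation which, precisely at the specialization $z^{2}=xy/2$, becomes hypergeometric in $t^{2}$:
\[
v(t)=-x\,{}_{2}F_{1}\!\Bigl(-\tfrac{y}{2},\,\tfrac{1-x-y}{2};\,\tfrac{1-x}{2};\,t^{2}\Bigr).
\]
One then recovers $u(t)$ by integrating $t^{-1-x}(1-t)^{-y}v(t)$ from $0$ to $t$, and the limit $t\to 1^{-}$ is evaluated termwise via the Beta integral together with Gauss's summation. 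That is the missing mechanism that makes Step~3 go through; once you insert it, the rest of your argument stands.
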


It is known that the $\mathbb{Q}$-vector space spanned by the multiple $T$-values is
closed with respect to multiplication.
Hence we obtain linear relations among the multiple $T$-values from Theorem \ref{thm:main} in principle
by expanding the products of $T(n)$'s in the Taylor expansion of the right side of \eqref{eq:main}.
For example, by calculating the coefficients of $x^{k-2}y^{2}$ for $k \ge 3$, we find that
\begin{align*}
{}-\left(\frac{1}{2}\,T(1,k-1)+\frac{1}{4}\sum_{j=2}^{k-2}T(k-j,j)\right)=
{}-\frac{k-1}{4}T(k)+\frac{1}{8}\sum_{j=2}^{k-2}T(j)T(k-j).
\end{align*}
Combining it with the shuffle product relation (see Eqn.(4.8) in \cite{KT2})
\begin{align*}
\frac{1}{2}\sum_{j=2}^{k-2}T(j)T(k-j)=
(2^{k-2}-2)T(1, k-1)+\sum_{j=2}^{k-2}(2^{j-1}-1)T(k-j, j),
\end{align*}
we reproduce the weighted sum formula
\begin{align*}
\sum_{j=2}^{k-1}2^{j-1}T(k-j, j)=(k-1)T(k)
\end{align*}
due to Kaneko and Tsumura \cite[Theorem 3.2]{KT2}.
See \cite{BCJXXZ} and the references therein for recent results on weighted sum
of variants of multiple zeta values.

The proof of \eqref{eq:main} is quite similar to
that by Ohno and Zagier \cite{OZ}
for a formula for the generating function of the sum of
the multiple zeta values
\begin{align*}
\zeta(\bk)=\sum_{\substack{0<m_{1}<\cdots <m_{n}}}
\frac{1}{m_{1}^{k_{1}}\cdots m_{n}^{k_{n}}}
\end{align*}
of fixed weight, depth and height.
They consider the function
\begin{align*}
\Phi_{0}(x, y, z; t)=\sum_{k>n\ge 1, \, s\ge 1}
\left(\sum_{\bk \in I_{0}(k, n, s)} L_{\bk}(t) \right)
x^{k-n-s}y^{n-s}z^{s-1},
\end{align*}
where $L_{\bk}(t)$ is the multiple polylogarithm of one variable
\begin{align*}
L_{\bk}(t)=\sum_{\substack{0<m_{1}<\cdots <m_{n}}}
\frac{t^{m_{n}}}{m_{1}^{k_{1}}\cdots m_{n}^{k_{n}}},
\end{align*}
and show that it is the unique solution of the differential equation
\begin{align*}
t(1-t)\frac{d^{2}\Phi_{0}}{dt^{2}}+\left((1-x)(1-y)-yt\right)\frac{d\Phi_{0}}{dt}+(xy-z)\Phi_{0}=1
\end{align*}
which is holomorphic in a neighborhood of $t=0$ and satisfies $\Phi_{0}(0)=0$.
The solution is written in terms of the hypergeometric function
$F(\alpha, \beta, \gamma; t)$.
Using the Gauss summation formula
\begin{align*}
F(\alpha, \beta, \gamma; 1)=\frac{\Gamma(\gamma)\Gamma(\gamma-\alpha-\beta)}
{\Gamma(\gamma-\alpha) \Gamma(\gamma-\beta)},
\end{align*}
we see that the generating function $\Phi_{0}(x, y, z; 1)$ of the multiple zeta values
is written in terms of Riemann zeta values $\zeta(n)$ with $n \ge 2$.

We will carry out the same calculation for the multiple $T$-values.
In this case
we have to solve Heun's equation, which is a Fuchsian equation of second order
with four singularities.
Although no closed explicit formula for solutions of Heun's general equation is known,
there are several methods to find solutions under special conditions of its parameters.
For our generating function in Theorem \ref{thm:main},
the technique found by Ishkhanyan and Suominen \cite{IS} works,
and we obtain the formula \eqref{eq:main} for the multiple $T$-values
in the same way as for the multiple zeta values.


\section{Proof of Theorem \ref{thm:main}}

In \cite{S} Sasaki defines the polylogarithm of level two.
In the proof of Theorem \ref{thm:main}
we use its multiple version introduced by Kaneko and Tsumura \cite{KT1}.
For an index $\bk=(k_{1}, \ldots , k_{n})$,
the multiple version of the polylogarithm of level two is defined by
\begin{align*}
\mathrm{Ath}(\bk; t)=
\sum_{\substack{0<m_{1}<\cdots <m_{n} \\
\forall{i}: \, m_{i}\equiv i \,  \hbox{\scriptsize mod}\, 2}}
\frac{t^{m_{n}}}{m_{1}^{k_{1}}\cdots m_{n}^{k_{n}}}.
\end{align*}
For the empty index $\bk=\emptyset$ we set $\mathrm{Ath}(\emptyset; t)=1$.
The function $\mathrm{Ath}(\bk; t)$ is holomorphic in the open disk $|t|<1$,
and, if $\bk$ is admissible, continuous on the closed disk $|t|\le 1$.
We have
\begin{align}
\frac{d}{dt}\mathrm{Ath}(\bk; t)=\left\{
\begin{array}{ll}
\displaystyle
\frac{1}{t}\, \mathrm{Ath}(k_{1}, \ldots , k_{n-1}, k_{n}-1; t) & (k_{n} \ge 2) \\
\displaystyle
\frac{1}{1-t^{2}}\, \mathrm{Ath}(k_{1}, \ldots , k_{n-1}; t) & (k_{n}=1)
\end{array}
\right.
\label{eq:ath-recursion}
\end{align}
for any non-empty index $\bk=(k_{1}, \ldots, k_{n})$.

For $n \ge 1$ and $k, s \ge 0$ we set
\begin{align*}
G(k, n, s; t)=2^{n}\sum_{\bk \in I(k, n, s)}\mathrm{Ath}(\bk; t), \quad
G_{0}(k, n, s; t)=2^{n}\sum_{\bk \in I_{0}(k, n, s)}\mathrm{Ath}(\bk; t).
\end{align*}
If the range of the sum is empty, the right side is zero.
Note that
\begin{align}
G_{0}(k, n, s; 1)=\sum_{\bk \in I_{0}(k, n, s)}T(\bk).
\label{eq:G=T}
\end{align}
{}From \eqref{eq:ath-recursion} we see that
\begin{align}
&
\frac{d}{dt}G_{0}(k, n, s; t)
\label{eq:diffG-1} \\
&\quad {}=\frac{1}{t} \left(
G_{0}(k-1, n, s; t)+G(k-1, n, s-1; t)-G_{0}(k-1, n, s-1; t)
\right),
\nonumber \\
&
\frac{d}{dt}\left( G(k, n, s; t)-G_{0}(k, n, s; t) \right)=\frac{2}{1-t^{2}}G(k-1, n-1, s; t),
\label{eq:diffG-2}
\end{align}
where we set
\begin{align*}
G(k, 0, s; t)=\left\{
\begin{array}{ll}
1 & ((k, s)=(0,0)), \\ 0 & \hbox{(otherwise)}.
\end{array}
\right.
\end{align*}

Now we consider the generating functions
\begin{align}
&
\Phi(t)=\Phi(x, y, z; t)=1+\sum_{\substack{k, n \ge 1 \\ s \ge 0}}
G(k, n, s; t)x^{k-n-s}y^{n-s}z^{2s},
\label{eq:def-Phi} \\
&
\Phi_{0}(t)=\Phi_{0}(x, y, z; t)=\sum_{\substack{n, k, s \ge 1}}
G_{0}(k, n, s; t)x^{k-n-s}y^{n-s}z^{2(s-1)}.
\label{eq:def-Phi0}
\end{align}
Although we will set $z^{2}=xy/2$ to derive \eqref{eq:main},
we keep the parameter $z$ for a while.

\begin{lem}\label{lem:unif-conv1}
Suppose that $|x|<1/2, |y|<1/4$ and $|z^{2}|<|xy|$.
\begin{enumerate}
  \item The right side of \eqref{eq:def-Phi} converges uniformly on the closed disk $|t|\le 1/2$.
  \item The right side of \eqref{eq:def-Phi0} converges uniformly on the closed disk $|t|\le 1$.
\end{enumerate}
\end{lem}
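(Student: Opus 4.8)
The strategy is the Weierstrass $M$-test, and the one point that needs care is a bound on $|\mathrm{Ath}(\bk;t)|$ that is \emph{uniform in the depth} of $\bk$. First I would reduce to real $t$: every coefficient $1/(m_1^{k_1}\cdots m_n^{k_n})$ in the defining series of $\mathrm{Ath}(\bk;t)$ is non-negative, so $|\mathrm{Ath}(\bk;t)|\le\mathrm{Ath}(\bk;|t|)\le\mathrm{Ath}(\bk;r)$ whenever $|t|\le r$. Thus for part (i) it suffices to bound $\mathrm{Ath}(\bk;1/2)$ over all indices $\bk$ of depth $n$, and for part (ii) to bound $\mathrm{Ath}(\bk;1)$ over all \emph{admissible} indices $\bk$ of depth $n$.

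For part (i), bounding each factor $1/m_j^{k_j}\le1$ and counting the chains $0<m_1<\cdots<m_n\le m$ gives
\begin{align*}
\mathrm{Ath}(\bk;1/2)\le\sum_{0<m_1<\cdots<m_n}(1/2)^{m_n}=\sum_{m\ge n}\binom{m-1}{n-1}(1/2)^m=\left(\frac{1/2}{1-1/2}\right)^{n}=1 .
\end{align*}
For part (ii) the bare count diverges, so I keep the first $n-1$ factors: using $k_j\ge1$ for $j<n$ and $k_n\ge2$,
\begin{align*}
\mathrm{Ath}(\bk;1)\le\sum_{0<m_1<\cdots<m_n}\frac{1}{m_1\cdots m_{n-1}\,m_n^{2}}=:b_n .
\end{align*}
Summing over $m_n$ and using $\sum_{m>M}m^{-2}<\sum_{m>M}\tfrac1{m(m-1)}=\tfrac1M$ gives $b_n<b_{n-1}$ for $n\ge2$, hence $b_n\le b_1=\zeta(2)$ for all $n\ge1$. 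So $|\mathrm{Ath}(\bk;t)|\le1$ on $|t|\le1/2$ for every index and $|\mathrm{Ath}(\bk;t)|\le\zeta(2)$ on $|t|\le1$ for every admissible index.

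With the crude count $|I(k,n,s)|\le|I(k,n)|=\binom{k-1}{n-1}$ this yields $|G(k,n,s;t)|\le2^{n}\binom{k-1}{n-1}$ on $|t|\le1/2$ and $|G_0(k,n,s;t)|\le\zeta(2)\,2^{n}\binom{k-1}{n-1}$ on $|t|\le1$. It remains to see that the majorant series converge: on the support of either sum the exponents $k-n-s$, $n-s$ and the power of $|z|$ (equal to $2s$ for \eqref{eq:def-Phi}, to $2(s-1)$ for \eqref{eq:def-Phi0}) are all $\ge0$; writing $|x|^{k-n-s}|y|^{n-s}=|x|^{k-n}|y|^{n}(|x||y|)^{-s}$ one first sums the geometric series in $s$ (convergent since $|z^{2}|<|xy|$), then uses $\sum_{k\ge n}\binom{k-1}{n-1}|x|^{k-n}=(1-|x|)^{-n}$, and is left with a finite multiple of $\sum_{n\ge1}\bigl(2|y|/(1-|x|)\bigr)^{n}$, which converges because $2|y|<\tfrac12<1-|x|$ under the hypotheses. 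Since all majorants are independent of $t$, the $M$-test gives uniform convergence on the stated disks; and as each $\mathrm{Ath}(\bk;t)$ is continuous there (holomorphic in the interior), so are $\Phi$ and $\Phi_0$.

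The only genuinely non-routine step is the depth-uniform bound $\mathrm{Ath}(\bk;1)\le\zeta(2)$ for admissible $\bk$: the obvious estimate via $\sum_{0<m_1<\cdots<m_{n-1}<m}(m_1\cdots m_{n-1})^{-1}\le H_{m-1}^{\,n-1}/(n-1)!$ only gives a bound growing like $C^{n}$ with $C>1$, which would force a hypothesis on $|y|$ much stronger than $|y|<1/4$; the telescoping inequality $b_n<b_{n-1}$ is exactly what keeps the growth rate at $1$. Everything else is bookkeeping with geometric series.
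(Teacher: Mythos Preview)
Your proof is correct and proceeds by a genuinely different route from the paper's.

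The paper obtains the depth-uniform bound on $|\mathrm{Ath}(\bk;t)|$ by invoking the duality relation $T(\underbrace{1,\ldots,1}_{n-1},2)=T(n+1)$ from Kaneko--Tsumura: for $|t|\le 1/2$ one uses $m\le 2^{m}$ to get $|\mathrm{Ath}(\bk;t)|\le T(k_{1},\ldots,k_{n-1},k_{n}+1)\le T(1,\ldots,1,2)=T(n+1)\le 2$, and the admissible case on $|t|\le 1$ is handled the same way. You instead argue entirely elementarily: for (i) the negative-binomial identity $\sum_{m\ge n}\binom{m-1}{n-1}(1/2)^{m}=1$ gives the bound~$1$ directly, and for (ii) the telescoping inequality $\sum_{m>M}m^{-2}<1/M$ yields $b_{n}<b_{n-1}$, hence $b_{n}\le\zeta(2)$ (which is in effect a self-contained proof that the multiple zeta value $\zeta(\underbrace{1,\ldots,1}_{n-1},2)$ stays bounded in $n$). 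The advantage of your approach is that the lemma becomes independent of the duality theorem quoted from \cite{KT2}; the paper's approach is shorter once that theorem is taken for granted and treats (i) and (ii) uniformly. The remaining bookkeeping with $|I(k,n,s)|\le\binom{k-1}{n-1}$ and the geometric/negative-binomial majorant series is the same in both arguments, and the paper in fact evaluates the majorant in the closed form $4|y|\big/\bigl((1-|z^{2}/xy|)(1-|x|-2|y|)\bigr)$, matching your final estimate.
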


\begin{proof}
Assume that $|t|\le 1/2$.
Since $m \le 2^{m}$ for $m \ge 1$, we have
\begin{align*}
\left|\mathrm{Ath}(\bk; t)\right| \le T(k_{1}, \ldots , k_{n-1}, k_{n}+1) \le
T(\underbrace{1, \ldots , 1}_{n-1}, 2)
\end{align*}
for any index $\bk=(k_{1}, \ldots , k_{n})$ of depth $n$.
Using the relation
\begin{align*}
T(\underbrace{1, \ldots , 1}_{n-1}, 2)=T(n+1),
\end{align*}
which is a special case of the duality of the multiple $T$-values proved in \cite{KT2},
and the inequality $T(n+1)\le 2$, we obtain
\begin{align*}
\left|G(k, n, s; t) \right| \le 2^{n+1} \left|I(k, n, s) \right| \le
2^{n+1}\sum_{s=0}^{n}\left|I(k, n, s) \right|=2^{n+1}\binom{k-1}{n-1}
\end{align*}
for $n \ge 1$.
Under the assumption on $x, y$ and $z$, it holds that
\begin{align*}
\sum_{\substack{k, n \ge 1 \\ s \ge 0}} 2^{n+1}\binom{k-1}{n-1}
\left|x^{k-n-s}y^{n-s}z^{2s} \right|=
\frac{4|y|}{(1-|z^{2}/xy|)(1-|x|-2|y|)}<+\infty.
\end{align*}
Therefore $\Phi(x, y, z; t)$ converges uniformly on $|t|\le 1/2$,
and this completes the proof of (i).
If $\bk$ is admissble, we have $|\mathrm{Ath}(\bk; t)| \le 2$ on
the closed disk $|t|\le 1$, which implies (ii).
\end{proof}

Because of Lemma \ref{lem:unif-conv1},
term-by-term differentiation is allowed and we see that
\begin{align*}
&
\frac{d}{dt}\Phi_{0}(t)=\frac{x}{t}\Phi_{0}(t)+\frac{1}{yt}\left(\Phi(t)-1-z^{2}\Phi_{0}(t)\right), \\
&
\frac{d}{dt}\left(\Phi(t)-z^{2}\Phi_{0}(t)\right)=\frac{2y}{1-t^{2}}\Phi(t)
\end{align*}
by using \eqref{eq:diffG-1} and \eqref{eq:diffG-2}.
Eliminating $\Phi(t)$ we obtain
\begin{align*}
t(1-t^2)\Phi_{0}''(t)+\left\{(1-x)(1-t^2)-2ty\right\}\Phi_{0}'(t)+2(xy-z^2)\Phi_{0}(t)=2.
\end{align*}
Set
\begin{align}
u(t)=1-(xy-z^{2})\Phi_{0}(t).
\label{eq:phi-to-u}
\end{align}
Then $u(t)$ is the unique solution of the homogeneous equation
\begin{align}
t(1-t^2)u''+\left\{(1-x)(1-t^2)-2ty\right\}u'+2(xy-z^2)u=0
\label{eq:u}
\end{align}
which is regular in the neighborhood of $t=0$ and satisfies $u(0)=1$.

The equation \eqref{eq:u} is Heun's equation with singularities at
$t=0, 1, \infty$ and $-1$.
In such a case, Heun's equation can be solved if
the parameters satisfy a special condition as discussed in \cite{IS}.
We change the dependent variable $u$ to
\begin{align*}
v(t)=t^{1+x}(1-t)^{y}(t^{-x}u(t))',
\end{align*}
where $t^{x}$ and $(1-t)^{y}$ are the principal values.
Then $v(t)$ is the unique solution of the equation
\begin{align}
t(1-t^2)v''-\left\{x(1-t^2)-2(y-1)t^{2}\right\}v'-\left\{(2z^{2}-xy)+y(y+x-1)t\right\}v=0
\label{eq:v}
\end{align}
which is holomorphic in a neighborhood of $t=0$ and satisfies $v(0)=-x$.
Now we consider the case where
\begin{align*}
z^{2}=xy/2.
\end{align*}
Then the solution $v(t)$ is given by
\begin{align*}
v(t)=-x\, F(-\frac{y}{2}, \frac{1-x-y}{2}, \frac{1-x}{2}; t^{2}),
\end{align*}
where $F(\alpha, \beta, \gamma; t)$ is the hypergeometric function
\begin{align*}
F(\alpha, \beta, \gamma; t)=\sum_{n \ge 0}
\frac{(\alpha)_{n}(\beta)_{n}}{n! \, (\gamma)_{n}}t^{n}.
\end{align*}
Here $(a)_{n}$ is the shifted factorial
$(a)_{n}=\prod_{j=0}^{n-1}(a+j)$.

For the time being we assume that
\begin{align}
\mathrm{Re}{x}<0, \qquad 0<\mathrm{Re}{y}<1.
\label{eq:assume-xy}
\end{align}
Then it holds that
\begin{align}
u(t)&=-x \, t^{x}\int_{0}^{t}ds \, s^{-1-x}(1-s)^{-y}
 F(-\frac{y}{2}, \frac{1-x-y}{2}, \frac{1-x}{2}; s^{2})
 \label{eq:formula-u} \\
 &=-x \, t^{x}\sum_{m \ge 0}
\frac{(-y/2)_{m}((1-x-y)/2)_{m}}{m! ((1-x)/2)_{m}}
 \int_{0}^{t}ds \, s^{2m-1-x}(1-s)^{-y}
 \nonumber \\
 &=-x\sum_{m \ge 0} t^{2m}
\frac{(-y/2)_{m}((1-x-y)/2)_{m}}{m! ((1-x)/2)_{m}}
 \int_{0}^{1}ds \, s^{2m-1-x}(1-ts)^{-y}
\nonumber
\end{align}
in the open disk $|t|<1$.
Thus we obtain the formula for the holomorphic solution $u(t)$ of
the differential equation \eqref{eq:u} of Heun type in the case of $z^{2}=xy/2$.

To prove Theorem \ref{thm:main} we take the limit of \eqref{eq:phi-to-u} as $t \to 1-0$.
{}From \eqref{eq:G=T} and Lemma \ref{lem:unif-conv1} (ii),
we see that the right side of \eqref{eq:phi-to-u} with $z^{2}=xy/2$
converges to the left side of \eqref{eq:main}.
Let us calculate the limit of $u(t)$.

\begin{lem}
Under the assumption \eqref{eq:assume-xy},
the right side of \eqref{eq:formula-u} converges uniformly
on the interval $0\le t \le 1$.
\end{lem}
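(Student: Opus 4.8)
The plan is to derive a $t$-independent, summable majorant for the general term of the series on the right side of \eqref{eq:formula-u},
\[
-x\sum_{m\ge 0}t^{2m}\,\frac{(-y/2)_{m}\,((1-x-y)/2)_{m}}{m!\,((1-x)/2)_{m}}\int_{0}^{1}s^{2m-1-x}(1-ts)^{-y}\,ds,
\]
and then invoke the Weierstrass $M$-test.

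First I would bound the integral uniformly in $t$. For $s\in(0,1)$ and $t\in[0,1]$ both $s$ and $1-ts$ are real numbers in $(0,1]$, so $|s^{2m-1-x}|=s^{2m-1-\mathrm{Re}\,x}$ and, since $1-ts\ge 1-s$ and $-\mathrm{Re}\,y<0$, also $|(1-ts)^{-y}|=(1-ts)^{-\mathrm{Re}\,y}\le(1-s)^{-\mathrm{Re}\,y}$. Because $\mathrm{Re}\,x<0$ gives $2m-\mathrm{Re}\,x>0$ and $\mathrm{Re}\,y<1$ gives $1-\mathrm{Re}\,y>0$, the dominating integral is a Beta integral and
\[
\left|\int_{0}^{1}s^{2m-1-x}(1-ts)^{-y}\,ds\right|\le\int_{0}^{1}s^{2m-1-\mathrm{Re}\,x}(1-s)^{-\mathrm{Re}\,y}\,ds=\frac{\Gamma(2m-\mathrm{Re}\,x)\,\Gamma(1-\mathrm{Re}\,y)}{\Gamma(2m-\mathrm{Re}\,x+1-\mathrm{Re}\,y)},
\]
independently of $t\in[0,1]$; by Stirling's formula this is $O(m^{\mathrm{Re}\,y-1})$ as $m\to\infty$.

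Next I would handle the hypergeometric coefficient. Writing $(a)_{m}=\Gamma(a+m)/\Gamma(a)$ — legitimate since under \eqref{eq:assume-xy} none of $-y/2,\ (1-x-y)/2,\ (1-x)/2$ is a non-positive integer — one gets
\[
\left|\frac{(-y/2)_{m}\,((1-x-y)/2)_{m}}{m!\,((1-x)/2)_{m}}\right|=C\left|\frac{\Gamma(m-y/2)\,\Gamma(m+(1-x-y)/2)}{\Gamma(m+1)\,\Gamma(m+(1-x)/2)}\right|
\]
with $C$ depending only on $x,y$, and Stirling's formula makes the right side $O\big(m^{\mathrm{Re}(-y/2+(1-x-y)/2-1-(1-x)/2)}\big)=O(m^{-\mathrm{Re}\,y-1})$. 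Multiplying the two estimates and using $|t^{2m}|\le 1$ on $[0,1]$, the $m$-th term is bounded by a constant times $m^{-\mathrm{Re}\,y-1}\cdot m^{\mathrm{Re}\,y-1}=m^{-2}$, uniformly in $t\in[0,1]$; since $\sum_{m\ge 1}m^{-2}<\infty$, the $M$-test gives uniform convergence on $[0,1]$.

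The exponent bookkeeping conspires to cancel to the convergent power $-2$, so the only points needing care are the uniformity of the integral bound up to $t=1$, where $(1-ts)^{-y}$ degenerates to the still-integrable singularity $(1-s)^{-\mathrm{Re}\,y}$, and the precise form of the Gamma-ratio asymptotics $\Gamma(m+\alpha)/\Gamma(m+\beta)=m^{\alpha-\beta}(1+O(1/m))$ for complex $\alpha,\beta$, whose modulus is $m^{\mathrm{Re}(\alpha-\beta)}(1+O(1/m))$; these are the expected, and only, technical hurdles.
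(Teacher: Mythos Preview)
Your argument is correct and follows the same Weierstrass $M$-test strategy as the paper. The paper's version is slightly more economical: it bounds $s^{2m-1-\mathrm{Re}\,x}\le s^{-1-\mathrm{Re}\,x}$ on $(0,1)$ to obtain an $m$-independent integral majorant $\int_{0}^{1}s^{-1-\mathrm{Re}\,x}(1-s)^{-\mathrm{Re}\,y}\,ds$, and then simply invokes the absolute convergence of the Gauss series $F(-y/2,(1-x-y)/2,(1-x)/2;1)$ (valid since $\mathrm{Re}(\gamma-\alpha-\beta)=\mathrm{Re}\,y>0$), thereby avoiding your Stirling bookkeeping while you in turn obtain the sharper explicit rate $O(m^{-2})$.
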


\begin{proof}
For $0\le t \le 1$ and $m \ge 0$, we have
\begin{align*}
&
\left|  t^{2m} \frac{(-y/2)_{m}((1-x-y)/2)_{m}}{m! ((1-x)/2)_{m}}
 \int_{0}^{1}ds \, s^{2m-1-x}(1-ts)^{-y} \right|  \\
 &\le
 \left| \frac{(-y/2)_{m}((1-x-y)/2)_{m}}{m! ((1-x)/2)_{m}} \right|
\int_{0}^{1}ds\, s^{-1-\mathrm{Re}{x}}(1-s)^{-\mathrm{Re}{y}}
\end{align*}
under the assumption \eqref{eq:assume-xy}.
Since the series $F(\alpha, \beta, \gamma; 1)$ converges absolutely if
$\mathrm{Re}(\alpha+\beta-\gamma)<0$,
the infinite sum
\begin{align*}
\sum_{m \ge 0}
 \left| \frac{(-y/2)_{m}((1-x-y)/2)_{m}}{m! ((1-x)/2)_{m}} \right|
\end{align*}
converges if $y$ satisfies \eqref{eq:assume-xy}.
Hence the right side of \eqref{eq:formula-u} passes the Weierstrass $M$-test on $0\le t \le 1$.
\end{proof}

Thus we obtain
\begin{align}
\lim_{t \to 1-0}u(t)&=-x \sum_{m \ge 0}
\frac{(-y/2)_{m}((1-x-y)/2)_{m}}{m! ((1-x)/2)_{m}}
\frac{\Gamma(2m-x)\Gamma(1-y)}{\Gamma(2m+1-x-y)}
\label{eq:limit-u} \\
&=\frac{\Gamma(1-x)\Gamma(1-y)}{\Gamma(1-x-y)}
\sum_{m \ge 0}
\frac{(-y/2)_{m}(-x/2)_{m}}{m! (1-(x+y)/2)_{m}}
\nonumber \\
&=\frac{\Gamma(1-x)\Gamma(1-y)}{\Gamma(1-x-y)}
F(-\frac{y}{2}, -\frac{x}{2}, 1-\frac{x+y}{2}; 1)
\nonumber \\
&=\frac{\Gamma(1-x)\Gamma(1-y)}{\Gamma(1-x-y)}
\frac{\Gamma(1-(x+y)/2)}{\Gamma(1-x/2)\Gamma(1-y/2)}.
\nonumber
\end{align}
Using
\begin{align*}
\Gamma(1-t)=\exp{\left(\gamma t+\sum_{n \ge 2}\frac{\zeta(n)}{n}t^{n}\right)},
\end{align*}
where $\gamma$ is Euler's constant, and
\begin{align*}
T(n)=2\left(1-\frac{1}{2^{n}}\right)\zeta(n),
\end{align*}
we see that the right side of \eqref{eq:limit-u} is equal to that of \eqref{eq:main},
which is holomorphic in a neighborhood of $(x, y)=(0, 0)$.
Therefore we can omit the assumption \eqref{eq:assume-xy}.
This completes the proof of Theorem \ref{thm:main}.

\begin{rem}
The equation \eqref{eq:v} can be solved explicitly also in the case of $z=0$,
and the solution is $v(t)=(1+t)^{y}$.
{}From it we reproduce the formula for the generating function of height one $T$-values
\begin{align*}
&
1-\sum_{k>n \ge 1}T(\underbrace{1, \ldots , 1}_{n-1}, k-n+1)x^{k-n}y^{n} \\
&=\frac{2 \Gamma(1-x)\Gamma(1-y)}{\Gamma(1-x-y)}
F(1-x, 1-y, 1-x-y; -1)
\end{align*}
proved by Kaneko and Tsumura \cite{KT2}.
We omit the details of the calculation.
\end{rem}

\section*{Acknowledgement}
The author is very grateful to Yasuo Ohno for a discussion on the problem
dealt with in this article and  for encouragement.
This work is partially supported by JSPS KAKENHI Grant Number 18K03233.



\begin{thebibliography}{000}

\bibitem{BCJXXZ}
Berger, S., Chandra, A., Jain, J., Xu, D., Xu, C. and Zhao, J.,
Weighted sums of Euler sums and other variants of multiple zeta values,
preprint, arXiv:2011.02393.

\bibitem{IS}
Ishkhanyan, A. and Suominen, K. A.,
New solutions of Heun's general equation,
\textit{J. Phys. A} \textbf{36} (2003), no. 5, L81--L85.

\bibitem{KT1}
Kaneko, M. and Tsumura, H.,
Zeta functions connecting multiple zeta values and poly-Bernoulli numbers,
\textit{Adv. Stud. Pure Math. Various Aspects of Multiple Zeta Functions} (2020), 181--204.

\bibitem{KT2}
Kaneko, M. and Tsumura, H.,
On a variant of multiple zeta values of level two,
preprint, arXiv:1903.03747.

\bibitem{OZ}
Ohno, Y. and Zagier, D.,
Multiple zeta values of fixed weight, depth, and height,
\textit{Indag. Math. (N.S.)} \textbf{12} (2001), no. 4, 483--487.

\bibitem{S}
Sasaki, Y.,
On generalized poly-Bernoulli numbers and related $L$-functions,
\textit{J. Number Theory} \textbf{132} (2012), no. 1, 156--170.

\end{thebibliography}
\end{document}